\newtheorem{Theorem}{Theorem}   [section]
\newtheorem{Construction}[Theorem] {Construction}
\newtheorem{Proposition}[Theorem] {Proposition}
\newtheorem{Ruleset}{Ruleset}
\title{The Game of Blocking Pebbles}
\author{Michael Fisher$^1$, Craig Tennenhouse$^2$\\
\small
$^1$Dept. of Mathematics, West Chester University, PA, USA\\
\small
$^2$Dept. of Mathematical Sciences, University of New England, ME, USA}
\begin{document}
\maketitle

\begin{abstract}
Graph Pebbling is a well-studied single-player game on graphs. We introduce the game of \textsc{blocking pebbles} which adapts Graph Pebbling into a two-player strategy game in order to examine it within the context of Combinatorial Game Theory. Positions with game values matching all integers, all nimbers, and many infinitesimals and switches are found. 
\end{abstract}

\section{Introduction}\label{sec:intro}
Graph Pebbling is an area of current interest in Graph Theory. In an undirected graph $G$, a root vertex $r$ is designated. Heaps of pebbles are placed on the vertices of $G$, with a legal move consisting of choosing a vertex $v$ with at least two pebbles, removing two pebbles, and placing a single pebble on a neighbor of $v$. The goal is to \emph{pebble}, or place a pebble on, the vertex $r$. The \emph{pebbling number} of $G$, denoted $\pi(G)$, is the fewest number of pebbles necessary so that any initial distribution of $\pi(G)$ pebbles among the vertices of $G$, and any vertex of $G$ chosen as the root, has a sequence of moves resulting in the root being pebbled.

Introduced by Chung in 1989~\cite{chung1989pebbling}, a number of results on pebbling of different families of graphs have been found. Of note are pebbling numbers of paths, cycles~\cite{pachter1995pebbling}, and continuing work on a conjecture of Graham's on the Cartesian products of graphs~\cite{chung1989pebbling}. Time complexity is also known, both for determination of $\pi(G)$ and for the minimum number of moves in a successful pebbling solution, for general graphs. See~\cite{hurlbert1999survey} for a survey of results in Graph Pebbling.

The results and language here are in reference to Combinatorial Game Theory (CGT). The \emph{nim sum}, also called the \emph{digital sum}, of non-negative integers is the result of their sum in binary without carry. This is denoted $x_1\oplus x_2$ if there are only two numbers, and in the case of more we use the notation $\sum \oplus x_i$. For more notation and background on the computation of CGT game values, we refer the reader to~\cite{berlekamp2003winning,albert2007lessons}.

In Sect. \ref{sec:ruleset} we will introduce a two-player combinatorial ruleset based on Graph Pebbling, with subsequent sections addressing results on both impartial and partisan positions. The widely studied game of \textsc{hackenbush} is related to the game of \textsc{blocking pebbles}, so we remind the reader of its rules here. As in all two-player combinatorial games, players are denoted Left and Right.

A position in \textsc{hackenbush} consists of a \emph{ground}, which is usually visualized as a horizontal line but can also be realized as a single vertex, along with a collection of three-edge-colored subgraphs (blue, red, and green) all connected to the ground, (see Fig. \ref{fig:hackenbush}). A move by Left consists of the removal of a single edge colored blue or green, while Right may only remove an edge colored red or green. Any remaining subgraphs disconnected from the ground are removed, and normal play continues until no edges remain and the player who removes the final edge is declared the winner.

In \textsc{hackenbush} all real numbers, nimbers, and ordinals are achievable game values.

\begin{figure}
\centering

\definecolor{cqcqcq}{rgb}{0.7529411764705882,0.7529411764705882,0.7529411764705882}
\definecolor{ffffff}{rgb}{1.,1.,1.}
\begin{tikzpicture}[line cap=round,line join=round,>=triangle 45,x=1.0cm,y=1.0cm]
\clip(-3.62,-2.46) rectangle (5.48,3.4);
\draw (-3.,-2.)-- (5.,-2.);
\draw [line width=3.6pt,color=cqcqcq] (-2.,-2.)-- (-2.,0.);
\draw [line width=3.6pt] (-2.,0.)-- (-3.,1.);
\draw (-3.,1.)-- (-2.,2.);
\draw [line width=3.6pt] (-2.,2.)-- (-1.,1.);
\draw (-1.,1.)-- (-2.,0.);
\draw [line width=3.6pt,color=cqcqcq] (-2.,2.)-- (-3.,3.);
\draw [line width=3.6pt,color=cqcqcq] (-2.,2.)-- (-2.,3.);
\draw [line width=3.6pt,color=cqcqcq] (-2.,2.)-- (-1.,3.);
\draw [shift={(3.,-0.5)},line width=3.6pt]  plot[domain=1.5707963267948966:4.71238898038469,variable=\t]({1.*1.5*cos(\t r)+0.*1.5*sin(\t r)},{0.*1.5*cos(\t r)+1.*1.5*sin(\t r)});
\draw [shift={(3.,0.)},line width=3.6pt,color=cqcqcq]  plot[domain=-1.5707963267948966:1.5707963267948966,variable=\t]({1.*1.*cos(\t r)+0.*1.*sin(\t r)},{0.*1.*cos(\t r)+1.*1.*sin(\t r)});
\draw [shift={(3.,2.)}] plot[domain=-1.5707963267948966:1.5707963267948966,variable=\t]({1.*1.*cos(\t r)+0.*1.*sin(\t r)},{0.*1.*cos(\t r)+1.*1.*sin(\t r)});
\draw [shift={(3.,2.5)},line width=3.6pt]  plot[domain=1.5707963267948966:4.71238898038469,variable=\t]({1.*0.5*cos(\t r)+0.*0.5*sin(\t r)},{0.*0.5*cos(\t r)+1.*0.5*sin(\t r)});
\begin{scriptsize}
\draw [fill=ffffff] (-2.,-2.) circle (2.5pt);
\draw [fill=ffffff] (-2.,0.) circle (2.5pt);
\draw [fill=ffffff] (-3.,1.) circle (2.5pt);
\draw [fill=ffffff] (-2.,2.) circle (2.5pt);
\draw [fill=ffffff] (-1.,1.) circle (2.5pt);
\draw [fill=ffffff] (-3.,3.) circle (2.5pt);
\draw [fill=ffffff] (-2.,3.) circle (2.5pt);
\draw [fill=ffffff] (-1.,3.) circle (2.5pt);
\draw [fill=ffffff] (3.,-2.) circle (2.5pt);
\draw [fill=ffffff] (3.,1.) circle (2.5pt);
\draw [fill=ffffff] (3.,-1.) circle (2.5pt);
\draw [fill=ffffff] (3.,3.) circle (2.5pt);
\draw [fill=ffffff] (3.,2.) circle (2.5pt);
\end{scriptsize}
\end{tikzpicture}
\caption{A \textsc{hackenbush} position}
\label{fig:hackenbush}
\end{figure}
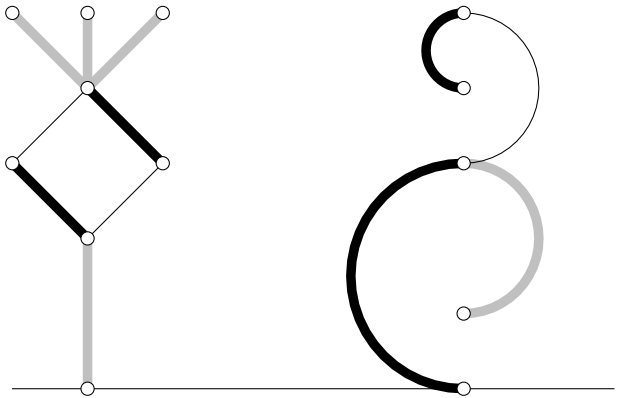

\section{Ruleset and play}\label{sec:ruleset}
A game of \textsc{blocking pebbles} consists of a directed acyclic graph $G$ and a $3$-tuple $(b,r,g)$ at each vertex of $G$, representing the number of blue, red, and green pebbles. Left may move blue and green pebbles, while Right may move red and green. This follows the convention of \textsc{hackenbush} wherein players may remove an edge of their own color or the neutral color green. However, where \textsc{hackenbush} players may only choose a single edge, in \textsc{blocking pebbles} players may move any number of pebbles at a single vertex. In this way, \textsc{blocking pebbles} is similar to \textsc{graph nim}~\cite{calkin2010computing}.

\begin{Ruleset}\label{rules}
Given a tuple of the form $(b,r,g)$ at each vertex of a directed acyclic graph $G$, Left can make one of the following two moves from the vertex $v$.
\begin{enumerate}
    \item Move a positive number of blue and/or green pebbles from $v$ to an in-neighbor of $v$
    \item Remove two blue and/or green pebbles from $v$ and place one on an out-neighbor of $v$
\end{enumerate}
No blue pebbles can be moved to a vertex with a non-zero number of red pebbles. Right has the obvious symmetric moves.

Play proceeds following the normal play convention, where the last player to make a legal move wins.
\end{Ruleset}

Note that if Left removes one blue and one green pebble from $v$ she may add the green to $v$'s out-neighbor. However, it is always preferable to instead add the blue as this results in a position with more blue pebbles and increases the number of vertices blocked by Left.

\begin{figure}\label{fig:ex1}
\centering
\small
\definecolor{qqqqff}{rgb}{0.,0.,1.}
\begin{tikzpicture}[line cap=round,line join=round,>=triangle 45,x=1.0cm,y=1.0cm,scale=0.65]
\clip(-10.06,-5.34) rectangle (9.5,4.98);
\draw [->] (-2.,4.) -- (-4.,1.);
\draw [->] (-4.,1.) -- (0.,1.);
\draw [->] (-2.,4.) -- (0.,1.);
\draw [->] (0.,1.) -- (3.,1.);
\draw (-2.48,4.8) node[anchor=north west] {(2,0,0)};
\draw (-4.48,0.74) node[anchor=north west] {(0,1,1)};
\draw (-0.48,0.74) node[anchor=north west] {(0,0,0)};
\draw (2.52,0.76) node[anchor=north west] {(0,1,1)};
\draw [->] (-7.,-1.) -- (-9.,-4.);
\draw [->] (-9.,-4.) -- (-5.,-4.);
\draw [->] (-7.,-1.) -- (-5.,-4.);
\draw [->] (-5.,-4.) -- (-2.,-4.);
\draw (-7.48,-0.2) node[anchor=north west] {(0,0,0)};
\draw (-9.48,-4.26) node[anchor=north west] {(0,1,1)};
\draw (-5.48,-4.26) node[anchor=north west] {(1,0,0)};
\draw (-2.48,-4.24) node[anchor=north west] {(0,1,1)};
\draw [->] (3.,-1.) -- (1.,-4.);
\draw [->] (1.,-4.) -- (5.,-4.);
\draw [->] (3.,-1.) -- (5.,-4.);
\draw [->] (5.,-4.) -- (8.,-4.);
\draw (2.52,-0.2) node[anchor=north west] {(2,0,0)};
\draw (0.52,-4.26) node[anchor=north west] {(0,1,1)};
\draw (4.52,-4.26) node[anchor=north west] {(0,0,1)};
\draw (7.52,-4.24) node[anchor=north west] {(0,1,0)};
\begin{scriptsize}
\draw [fill=qqqqff] (-2.,4.) circle (2.5pt);
\draw[color=qqqqff] (-1.54,3.98) node {$A$};
\draw [fill=qqqqff] (-4.,1.) circle (2.5pt);
\draw[color=qqqqff] (-4.06,1.62) node {$B$};
\draw [fill=qqqqff] (0.,1.) circle (2.5pt);
\draw[color=qqqqff] (-0.04,1.62) node {$C$};
\draw [fill=qqqqff] (3.,1.) circle (2.5pt);
\draw[color=qqqqff] (2.92,1.56) node {$D$};
\draw [fill=qqqqff] (-7.,-1.) circle (2.5pt);
\draw [fill=qqqqff] (-9.,-4.) circle (2.5pt);
\draw [fill=qqqqff] (-5.,-4.) circle (2.5pt);
\draw [fill=qqqqff] (-2.,-4.) circle (2.5pt);
\draw [fill=qqqqff] (3.,-1.) circle (2.5pt);
\draw [fill=qqqqff] (1.,-4.) circle (2.5pt);
\draw [fill=qqqqff] (5.,-4.) circle (2.5pt);
\draw [fill=qqqqff] (8.,-4.) circle (2.5pt);
\end{scriptsize}
\end{tikzpicture}
\normalsize
\caption{A position in \textsc{blocking pebbles} and two of Left's options}
\end{figure}
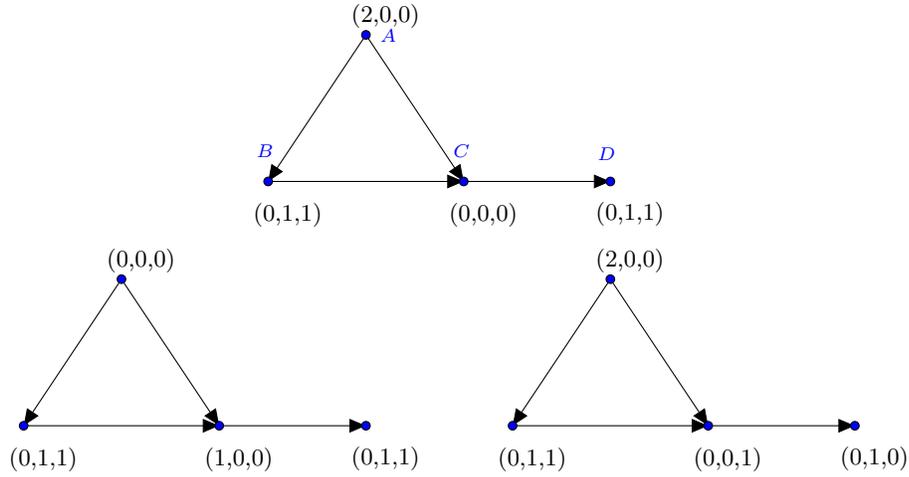

As an example, consider the position in Figure \ref{fig:ex1}. At the top is a position in \textsc{blocking pebbles}. Note that Left cannot move any blue pebbles from vertex $A$ to $B$ since $B$ already contains a red pebble. However, Left can move a single blue pebble from $A$ to $C$ at a cost of one blue pebble. She can also move the one green pebble from $D$ to $C$.

An interesting property of this ruleset is the existence of \emph{discovered moves}, similar to discovered attacks in chess. A player may be unable to move at one point in the game, but after their opponent moves then the game is once again playable by the first player. As an example consider a simple out-star with two red pebbles on the source and a single blue pebble on a sink node. Left has no moves, but once Right moves Left can move their pebble to the source.

\section{Blue-Red-Green Blocking Pebbles}\label{sec:br}

In this section we will address some families of game values that are achievable in \textsc{blocking pebbles}. We will only address finite graphs, hence we will not encounter non-dyadic rationals. This is similar to hackenbush, described in Sect.~\ref{sec:intro}. Due to the complexity of analysis, we will also restrict our graphs to orientations of stars, paths, and small graphs.

We begin with a simple result.
\begin{Theorem}\label{thm:integers}
For every $k\in \mathbb{Z}$ there is a position in \textsc{blocking pebbles} with value $k$.
\end{Theorem}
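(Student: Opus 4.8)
The plan is to exhibit an explicit position for each $k\in\mathbb{Z}$ and to compute its value by induction on $|k|$, using monochromatic positions so that one of the two players is completely inert throughout.

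For $k=0$, take the position consisting of a single vertex carrying the tuple $(0,0,0)$: neither player has a legal move, so the value is $\{\,\mid\,\}=0$. For $k>0$, consider the directed path on vertices $v_0,v_1,\dots,v_k$ with arcs $v_{i-1}\to v_i$ for $1\le i\le k$, carrying a single blue pebble on $v_k$ and nothing elsewhere; call this position $G_k$, and let $G_0$ be the one-vertex position above. Since $G_k$ and all of its followers contain no red or green pebbles, Right never has a move, so each is an all-Left position. In $G_k$, Left's only legal move is to slide the lone blue pebble from $v_k$ to its unique in-neighbor $v_{k-1}$; a pebbling move is unavailable for want of two pebbles at any vertex, and the sliding move can transport only the single pebble present. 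Hence $G_k=\{\,G_{k-1}\mid\,\}$, and by induction, together with the standard evaluation $\{\,n-1\mid\,\}=n$ of the integers, the value of $G_k$ is $k$.

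For $k<0$, I would use the same directed path with a single red pebble on $v_{|k|}$ in place of the blue one; this position is the Left--Right (equivalently blue--red) mirror image of $G_{|k|}$, so its value is $-|k|=k$. The only point requiring attention is the routine verification that in each of these positions the player owning the pebble has exactly one legal move while the other player has none, so that the game tree is a single stalk; granting that, the value drops straight out of the recursive definition of the integers, and no genuine obstacle remains.
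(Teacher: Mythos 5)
Your proof is correct, but it uses a genuinely different construction from the paper's. The paper takes a fixed graph --- a single arc $u\to v$ --- and puts $2k$ blue pebbles plus one red pebble on $u$; each Left move spends two blue pebbles to push one onto $v$, giving exactly $k$ moves, and the lone red pebble on $u$ is essential as a blocker: without it Left could slide pebbles from $v$ back to the (in-neighbor) source for free and inflate her move count. Your construction instead grows the graph with $k$ --- a directed path of length $k$ with a single blue pebble on the sink that walks backward one vertex per move --- and needs no blocking device at all, since a lone pebble can neither be split by a pebbling move nor recycled for extra moves; the game tree is a bare stalk and the value $\{\,k-1\mid\,\}=k$ falls out by induction. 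Your version is somewhat more fully justified than the paper's one-line argument (which does not explain the role of the red pebble), at the cost of using an unboundedly large graph; the paper's version shows the stronger fact that all integers already arise on a two-vertex digraph. Both are valid proofs of the stated theorem.
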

\begin{proof}
Let $G$ be a single arc directed from $u$ to $v$. If $k>0$ then place $2k$ blue pebbles and a single red pebble on $u$. Switch red and blue pebbles if $k<0$. Zero is trivially achieved by a graph with no pebbles, or any number of other pebble distributions.
\end{proof}

Regarding infinitesimals, $\downarrow$ is realized by an out-star with two leaves; that is, a vertex $u$ with out-neighbors $v_1,v_2$. Vertex $v_1$ has a blue pebble, and $v_2$ has one red and one green pebble. Left can move the blue or green pebble to $u$, which is simple to identify as $*$. Right, however, can move the green to the source vertex $u$ resulting in $*$, the red to $u$ resulting in zero, or both red and green pebbles to $u$ which is also a zero position. Since zero dominates $*$, the initial position is $\{*|0\}=\downarrow$.

Due to the blocking rule, \textsc{blocking pebbles} is relatively unique among partisan combinatorial games. In \textsc{hackenbush} the presence of a move for one player does not inhibit moves for the other. In \textsc{clobber}, another two-player partisan combinatorial game, (see~\cite{albert2005introduction}), the presence of a red piece actually encourages movement for Left, and vice versa. This is a property common to all dicot games. However, in \textsc{blocking pebbles} a single well-placed blue pebble, for example, can cut off many of Right's moves. It's natural, then, that many positions result in game values that are switches.

Other results concerning bLue/Red Out-Star positions include the following.

\bigskip

\begin{Theorem} Let $b_c \geq 0$  be the number of blue pebbles on the non-leaf vertex (center vertex) of a given out-star $T$.  Further, let $b_{\ell} \geq 0$ be the total number of blue pebbles on the leaves of $T$.  The parameters $r_c$ and $r_{\ell}$ are defined similarly for red. Then 

\begin{enumerate}

\item if $T$ has only blue pebbles, then the game value of $T$ is $3b_{\ell} - 2 + 2b_c$,


\item if $T$ has at least one blue pebble and no red pebbles on its center, at least one blue pebble on some collection of leaves, and at least one red pebble on some other collection of leaves, the game value of $T$ is $3b_{\ell} - 2 + 2(b_c-1)$, 

\item if $T$ has no pebbles on its center, exactly one blue pebble on a leaf, and exactly one red pebble on a leaf (possibly the same leaf), the game value of $T$ is $*$,

\item if $T$ has no pebbles on its center, two or more blue pebbles on some collection of leaves, and exactly one red pebble on some other leaf, the game value of $T$ is $\{ 3(b_{\ell}-1)-2 \ | \ 0 \}$, and 

\item if $T$ has no pebbles on its center, at least two blue pebbles on some collection of leaves, and at least two red pebbles on some other collection of leaves, the game value of $T$ is $$\{3(b_{\ell}-1)-2 \ | \ -(3(r_{\ell}-1)-2)\}.$$

\end{enumerate}

\end{Theorem}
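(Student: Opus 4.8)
The plan is to analyze the out-star $T$ under each of the five hypotheses by directly computing the Left and Right options and then simplifying. The five cases are essentially a case split on how many blue and red pebbles sit on the center versus the leaves, so the natural approach is induction on the total number of pebbles, with the base cases being small configurations whose values ($0$, $*$, $\downarrow$, small integers) can be checked by hand, exactly as in the examples preceding the statement. Throughout, I would exploit two structural facts about out-stars: a leaf has out-degree $0$, so the only moves from a leaf are of type~(1), pushing pebbles to the center; and the center has in-degree $0$, so the only moves from the center are of type~(2), converting two pebbles at the center into one pebble at a leaf (which is then stuck there). I would also invoke the observation already made in the text that moving a blue pebble is never worse than moving a green one, so green pebbles can be treated as extra blue pebbles for Left (and as extra red for Right), which lets me reduce every case to the blue/red bookkeeping summarized by $b_c, b_\ell, r_c, r_\ell$.

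For part~(1), with only blue pebbles present, Right has no moves, so the value is a nonnegative integer equal to the length of the longest Left play; I would argue that Left's optimal strategy is to first funnel all leaf blue pebbles to the center (each leaf pebble, if it can't be profitably used otherwise, contributes moves) and then repeatedly fire the center, and count: $b_\ell$ moves to collect leaf pebbles onto the center yields $b_c + b_\ell$ pebbles there, but the accounting that produces $3b_\ell - 2 + 2b_c$ needs care — I would verify the formula on the smallest instances ($b_\ell=1,b_c=0$ should give $1$; $b_\ell=0, b_c=1$ should give $0$) and then show the general count follows by induction, each additional leaf pebble adding $3$ and each additional center pebble adding $2$. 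Part~(2) is the same computation but with one blue pebble "spent" neutralizing the lone constraint imposed by the red leaf pebbles — since Left has no red pebbles to move, Right still has no useful move once we are in this configuration? No: Right can move red leaf pebbles to the center. So I would show that the red pebbles on leaves, having no center pebbles to pair with and being unable to advance past the center usefully, contribute a value dominated by Left's play, and the net effect is to reduce the blue center count by $1$, giving $3b_\ell - 2 + 2(b_c - 1)$.

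Part~(3) I would simply quote the $\downarrow$-style computation from the paragraph above: one blue leaf pebble, one red leaf pebble, empty center — Left's only move is to push blue to the center (value $0$, since then Right pushes red to center and wins, or rather the resulting position is $0$), Right's only move is symmetric, so the position is $\{0 \mid 0\} = *$. Parts~(4) and~(5) are the switch computations and are where the real work lies. For~(4): with $b_\ell \ge 2$ blue on leaves, one red on a leaf, empty center, Left's best move is to bring blue pebbles to the center and start firing; once the red pebble is present Left must leave at least one blue "guarding" or account for the block, and I expect the Left option to simplify to the integer $3(b_\ell - 1) - 2$ (one blue pebble's worth of value is lost to the red constraint, matching part~(2) with $b_c$ replaced appropriately). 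Right's only move is to push the single red pebble to the center, after which Left can then move a blue pebble to that center vertex? No — the center now has a red pebble, so blue is blocked there; I would check that Right's resulting position has value $0$ (Right has fired away their only resource and blue is frozen), giving $\{3(b_\ell-1)-2 \mid 0\}$. Part~(5) combines this with the mirror image: now both sides have $\ge 2$ leaf pebbles, the center is empty, and by symmetry Left's best option is the same integer $3(b_\ell-1)-2$ as in~(4) and Right's best option is its negative $-(3(r_\ell-1)-2)$, yielding the stated switch; the key point to nail down is that no intermediate option (e.g., Left firing the center and creating a pebble on a leaf, then Right responding) beats these, which is where I expect to spend most of the effort — establishing the \emph{reversibility} or \emph{domination} of all the messier options so that only the two extreme ones survive in the canonical form. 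The main obstacle, then, is the option-pruning in cases (4) and (5): showing that once a player commits to funneling pebbles to the center and firing, the opponent's interleaved responses never produce a better outcome than the clean endgame values, so that the game genuinely collapses to the claimed two-option switch.
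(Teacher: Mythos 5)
Your overall plan---induction on the game tree of the out-star, using the structural facts that leaves can only push pebbles to the center (type-1 moves) and the center can only fire pairs onto leaves (type-2 moves), then identifying the dominant option in each case---is the same approach the paper takes. But your treatment of case (2) contains a genuine error that undermines the argument. You write ``Right still has no useful move once we are in this configuration? No: Right can move red leaf pebbles to the center.'' This is exactly backwards: in case (2) the center carries at least one blue pebble, and the blocking rule (symmetrically applied) forbids Right from moving red pebbles onto a vertex holding a blue pebble. Since a leaf's only in-neighbor is the center, and the center holds no red pebbles to fire, Right has \emph{no} moves at all. That is the entire content of case (2): the position is a Left-only integer, computed just as in case (1), and the $2(b_c-1)$ rather than $2b_c$ reflects that Left can never afford to vacate the center, so one center pebble is permanently unavailable for firing. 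Your proposed patch---arguing that the red leaf pebbles ``contribute a value dominated by Left's play''---is not a valid substitute: if Right genuinely had moves the position would generally not be an integer, and no domination argument explains why exactly $2$ is deducted.

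The error propagates: in cases (4) and (5) the Left stop $3(b_\ell-1)-2$ is obtained by applying case (2) to the position reached after Left moves one blue pebble onto the empty center, so your justification of the switch values inherits the gap. (You do apply the blocking rule correctly on Right's side of case (4), observing that a red pebble on the center freezes Left, so the problem is an inconsistency rather than unawareness of the rule.) A smaller instance of the same slip appears in case (3), where your parenthetical ``Right pushes red to center and wins'' is impossible once the center holds blue; the position after Left's move is $0$ because \emph{neither} player can move. Once you correct case (2)---Right is frozen, so by induction the value is $\{\,3b_\ell-2+2(b_c-1)-1 \mid \,\} = 3b_\ell-2+2(b_c-1)$---the rest of your outline matches the paper's proof.
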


\bigskip

\begin{proof}  For (1), moving $j$ blue pebbles from a leaf to the center vertex gives a position of value $3(b_{\ell}-j) - 2 + 2(b_c + j) =  3b_{\ell} - 2 + 2b_c - j$.  Moving a pebble from the center to a leaf gives a position of value $3(b_{\ell}+1) - 2 + 2(b_c - 2) =  3b_{\ell} - 2 + 2b_c - 1$.  Thus, the Left option obtained by moving one blue center pebble to a leaf dominates (or is equal to) every other Left option.  As Right has no available move, by induction, we see that the value of any position of this type is $$\{  3b_{\ell} - 2 + 2b_c - 1 \ | \ \} = 3b_{\ell} - 2 + 2b_c.$$

\bigskip

In (2), like in (1), we see that an optimal move for Left is to move a blue pebble to the central vertex.  As Right again has no available move, we see that the value of any position of this type is $$\{  3b_{\ell} - 2 + 2(b_c-1) - 1 \ | \ \} = 3b_{\ell} - 2 + 2(b_c-1).$$
\bigskip

Case (3) is easily seen to be true.
\bigskip

In case (4), we see by using past reasoning that Left's best option is to move exactly one pebble to the center, yielding a position of value $3(b_{\ell}-1) - 2.$ Right's only move is to 0.   Hence, the value of the starting position is $\{ 3(b_{\ell}-1)-2 \ | \ 0 \}$.
\bigskip

Case (5) now follows easily from prior reasoning.  

\end{proof}

\bigskip

For the next result, we use the following notation for a bLue/Red pebbling configuration of the out-star $K_{1,2}$:  $[(a,b),[c,d],[e,f]]$ is the configuration with $a$ blue pebbles and $b$ red pebbles on the central vertex, $c$ blue pebbles and $d$ red pebbles on one of the pendant vertices, and $e$ blue pebbles and $f$ red pebbles on the other pendant vertex.

\begin{Theorem} The following results pertain to a given Pebbling configuration on the out-star $K_{1,2}$. 

\begin{enumerate}

\item For $c \geq 1$, the position $[(a,b),[0,c],[0,0]]$ has value $-\lfloor \frac{b}{2} \rfloor$ if $a = 1$ and value $\left\{ \lfloor \frac{a}{2}  \rfloor - 1 \ \vert \ \lfloor \frac{a-b}{2} \rfloor + 1 \right\}$ if $a \geq 2$,

\item for $a, b, c, d \geq 1$, the position $[(a,b),[c,0],[0,d]]$ has value $\lfloor \frac{a-b}{2} \rfloor$,

\item for $a, b, c, d, e \geq 1$, the position $[(a,b),[c,0],[d,e]]$ has value $\lfloor \frac{a-1}{2} \rfloor$,

\item for $a, b, c, d \geq 1$, the position $[(0,0),[a,b],[c,d]]$ has value $\{ a+c-1 \ \vert \ -(b+d-1) \}$,

\item for $a, b, c \geq 1$, the position $[(0,0),[a,b],[0,c]]$ has value $\{ a-1 \ \vert \ -(3(b+c)-5) \}$,

\item for $a, b \geq 1$, the position $[(0,0),[a,b],[0,0]]$ has value $\{ 3a-5 \ \vert \ -(3b-5) \}$,

\item for $b \geq 1$, $[(1,0),[0,b],[0,0]]$ and $[(2,0),[0,b],[0,0]]$ are both zero positions.

\end{enumerate}

\end{Theorem}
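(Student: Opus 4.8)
The plan is to establish all seven items together, by induction on the total number of pebbles, ordering the cases so that every position reached by a single move is evaluated either by the previous out-star theorem, by the inductive hypothesis, or by an earlier item in the list, with item~7 (the essentially stuck configurations) anchoring the induction. For a fixed configuration I would first read the legal moves straight off Ruleset~\ref{rules}: from a leaf the only move is to shovel a positive number of one's own pebbles onto the centre, legal exactly when the centre holds no pebble of the \emph{opposite} colour; from the centre the only move is a pebbling step, which deletes two of one's pebbles and deposits one on a leaf holding no opposing pebble. Having listed Left's and Right's options and evaluated them, I would discard the dominated options, reverse out the reversible ones, and read off the canonical form, which in each case is the asserted switch, number, or $*$. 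The structural fact driving everything is that a pebble of either colour on the centre blocks the opponent from shovelling onto it, so placing a single pebble on the centre is a \emph{freezing} move; this is why so many of these values are hot, and it is why items 1--3 are subtler than they look.

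For item~7, the configuration $[(1,0),[0,b],[0,0]]$ admits no legal move for either player (Left has too few pebbles to pebble and no leaf blue, and Right's red leaf pebbles are blocked from the centre by the lone blue pebble), so it has value $0$; in $[(2,0),[0,b],[0,0]]$ Right again has no move, Left's only move is the pebbling step to $P:=[(0,0),[0,b],[1,0]]$, and $P$ is a first-player win for both players (Left wins by shovelling the blue pebble onto the empty centre and quoting the first clause; Right wins by shovelling all $b$ red pebbles onto the centre, which strands the blue pebble), whence $P\parallel 0$ and $[(2,0),[0,b],[0,0]]=\{\,P\mid\,\}=0$.

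For items 4, 5 and~6 the centre is empty, so both players can shovel leaf pebbles onto it; I would show that each player's optimal plan in the generic case is to raise exactly one pebble, which freezes the opponent, and then run a chain of same-colour moves whose ``unit of progress'' is a shovel-then-pebble cycle. That cycle is worth $3$ whenever some leaf is, or can be emptied to be, free of the opposing colour (exactly the coefficient~$3$ of the out-star theorem) and worth only $1$ when every leaf is permanently blocked; this gives the coefficient~$1$ on Left's side of items~4 and~5 (both leaves always carry red) and the coefficient~$3$ in item~6 and on Right's side of item~5 (the empty leaf can be pebbled onto), with the $-5$ offsets absorbing the edge effects at the ends of the chain. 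The Left--Right colour symmetry of the ruleset turns each position's Right option into the mirror of the colour-swapped position's Left option, supplying the right-hand entries; item~4 is the genuinely new mixed configuration and needs its own bookkeeping, but by the same mechanism. These arguments pass through several auxiliary configurations not in the statement --- notably the centre carrying one colour only with a leaf carrying both --- whose values I would record as short lemmas, each with a base case coinciding with item~7.

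Items 1, 2 and~3 are where the real work is: the centre already holds pebbles of both colours, so neither player can shovel onto it and essentially the only moves are pebbling steps out of the centre, and the position degenerates into a tame race in which Left spends blue centre pebbles two at a time and Right spends red ones two at a time --- hence the appearance of $\lfloor(a-b)/2\rfloor$ and its relatives. The main obstacle I anticipate is the \emph{unfreezing} bookkeeping: as soon as a player pebbles their colour on the centre down to $0$ (or, for the mover, to $1$) the opponent is released and shovels their remaining leaf pebbles up, costing the first player roughly one tempo, and one must chase exactly how many such tempi each side loses as a function of the parities of the centre counts and of how many opposing leaf pebbles remain --- which is what pins down the precise floor expressions, and which collapses entirely when $a=1$ (Left never moves, leaving the defensive $-\lfloor b/2\rfloor$). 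Keeping straight, through the recursion, which leaf a pebbling step may legally target and when a shovelling move first becomes available, together with verifying the small base cases of this inner induction and confirming that no surviving option is dominated or reversible, is where the care will be concentrated.
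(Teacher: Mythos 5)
Your sketch follows essentially the same route as the paper's proof: a simultaneous induction that reads each player's shovel and pebbling options off the ruleset, discards dominated options, and evaluates the survivors via the preceding out-star theorem and the earlier items, with the freezing/unfreezing tempo loss you single out being exactly what produces the $\lfloor (a-1)/2\rfloor$-type corrections in items 1--3. The one case you work in full, item 7 (including the verification that $[(0,0),[0,b],[1,0]]\parallel 0$), is correct and in fact more explicit than the paper's treatment, so there is no substantive difference in approach.
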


\bigskip

\begin{proof} For (1), the position $[(1,1),[c,0],[0,0]]$ is the zero position.  It is also readily checked that the position $[(1,2),[c,0],[0,0]]$ has value 0.

If $b > 2$, then Left has no move from $[(1,b),[c,0],[0,0]]$. From $[(1,b),[c,0],[0,0]]$, Right may move to the position $[(1,b-2),[c,0],[0,1]]$, which has value $\lfloor \frac{-b+1}{2} \rfloor = -\lfloor \frac{b}{2} \rfloor + 1$ by induction.  Hence, $[(1,b),[c,0],[0,0]]$ has value $-\lfloor \frac{b}{2} \rfloor$ as required.

If $a \geq 2$, then Left's best move from $[(a,b),[c,0],[0,0]]$ is to $[(a-2,b),[c,0],[1,0]]$ which has value $\lfloor \frac{a-2}{2} \rfloor$ (Right has no move from this position and Left has $\lfloor \frac{a-2}{2} \rfloor$ moves).  Right's only move is to $[(a,b-2),[c,0],[0,1]]$ which has value $\lfloor \frac{a-b+2}{2} \rfloor$, also by induction.  Hence, $[(a,b),[c,0],[0,0]]$ has value $\left\{ \lfloor \frac{a}{2}  \rfloor - 1 \ \vert \ \lfloor \frac{a-b}{2} \rfloor + 1 \right\}$ when $a \geq 2$.

For (2),  it is clear that the position $[(1,1),[c,0],[0,d]]$ is a zero position.  If $a \geq 2$, then, from $[(a,1),[c,0],[0,d]]$, Left has a move to $[(a-2,1),[c+1,0],[0,d]]$ and Right has no move.  Thus, $[(a,1),[c,0],[0,d]]$ has value $\lfloor \frac{a-1}{2} \rfloor$, by induction.  A similar argument establishes the claim that $[(1,b),[c,0],[0,d]]$ has value $\lfloor \frac{1-b}{2} \rfloor$.

Now if $a, b \geq 2$, then, from $[(a,b),[c,0],[0,d]]$, Left has the move to $[(a-2,b),[c+1,0],[0,d]]$ and Right has the move to $[(a,b-2),[c,0],[0,d+1]]$.  By induction, we see that $[(a,b),[c,0],[0,d]]$ has value $$\left\{ \lfloor \frac{a-2-b}{2} \rfloor \vert \lfloor \frac{a-b+2}{2} \rfloor \right\} = \left\{ \lfloor \frac{a-b}{2} \rfloor -1\vert \lfloor \frac{a-b}{2} \rfloor+1 \right\}  = \lfloor \frac{a-b}{2} \rfloor.$$

For case (3), note that if $a=1$, then there are no moves for either player; the formula given correctly yields the game value 0.  If $a=2$, then from the position $[(2,b),[c,0],[d,e]]$ Left has the move to $[(0,b),[c+1,0],[d,e]]$.  From here, Left has no move and Right has $e$ moves.  Thus the position $[(0,b),[c+1,0],[d,e]]$ has value $-e$.  Hence, $[(2,b),[c,0],[d,e]]$ has value 0, as required.

If $a > 2$, then, from $[(a,b),[c,0],[d,e]]$, Left can move to $[(a-2,b),[c+1,0],[d,e]]$ which has value $\lfloor\frac{a-3}{2} \rfloor = \lfloor\frac{a-1}{2} \rfloor - 1$, by induction.  Right has no moves from $[(a,b),[c,0],[d,e]]$.  Hence, $[(a,b),[c,0],[d,e]]$ has value  $\{\lfloor \frac{a-1}{2} \rfloor - 1 \vert \ \} = \lfloor \frac{a-1}{2}\rfloor$ as desired.

For (4), note that Left's only move from $[(0,0),[a,b],[c,d]]$ is to $[(1,0),[a-1,b],[c,d]]$.  This last position has value $a+c-1$ by induction.  Similarly, Right's only move from $[(0,0),[a,b],[c,d]]$ is to $[(0,1),[a,b-1],[c,d]]$.  This position has value $-(b+d-1)$ by induction.  It now follows that $[(0,0),[a,b],[c,d]]$ has value $\{ a+c-1 \ \vert \ -(b+d-1) \}$.

Cases (5) and (6) follow from the previous result and case (7) is trivial.

\end{proof}

\bigskip

Below we present some results pertaining to given positions on an in-star $T$.

\bigskip

\begin{Theorem} Let $b_c \geq 0$  be the number of blue pebbles on the non-leaf vertex (center vertex) of a given in-star $T$.  Further, let $b_{\ell} \geq 0$ be the total number of blue pebbles on the leaves of $T$.  The parameters $r_c$ and $r_{\ell}$ are defined similarly for red. Then 

\begin{enumerate}

\item if $T$ has at least one blue pebble on some collection of leaves, at least one blue pebble on the center vertex, but no red pebbles anywhere, the game value of $T$ is $3b_{c} + 2b_{\ell} - 2$,

\item the position $[(0,0),[0,1],[1,0]]$ has value  0 and the position $[(0,0),[0,2],[2,0]]$ has value  $*$,

\item for $a \geq 2$, the position $[(0,0),[a,0],[0,1]]$ has value  $2a-2$,

\item for $a \geq 3$, the position $[(0,0),[a,0],[0,2]]$ has value $\{2a-6 | 0 \}$,

\item for $a, b \geq 3$, the position $[(0,0),[a,0],[0,b]]$ has value $\{2a-6 | -2b+6 \}$, and

\item for $a, b \geq 1$ and $c \geq 2$, the position $[(a,0),[b,0],[0,c]]$ has value $3a+2b-5$.

\end{enumerate}

\end{Theorem}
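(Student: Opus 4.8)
The plan is to prove all six parts by a single induction on the total number of pebbles, using two structural features of an in-star. First, the centre is a sink: from a leaf the only available move is one pebbling step (remove two of one's own or green pebbles from a leaf and place one on the centre), and from the centre the only move is to slide a positive number of one's pebbles onto leaves, subject throughout to the blocking rule. Second, swapping the two colours — and, when the leaves are not already symmetric, swapping the two leaves — conjugates a position and so negates its value; this gives the self-conjugate entries outright (the $0$ and the $*$ of part (2)) and roughly halves the analysis of the switches in (4)--(6). The finitely many base positions — no pebbles, too few for anyone to move, or the small explicit ones such as $[(1,1),\dots]$ and $[(0,1),[0,0],[1,0]]$ — are checked directly.

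Part (1) is cleanest on its own, since it has no red pebbles: Right never moves, so the value of any position reachable from $T$ is simply the greatest number of moves Left can make from it (the recursion $G=\{\,G^L\mid\,\}$ over non-negative integers collapses to $\max(G^L)+1$). I would pin this number down by a two-sided count. Each pebbling step destroys one pebble, so Left takes at most $b_c+b_\ell-1$ of them. Each slide move only removes pebbles sitting on the centre, and a pebble sits on the centre only if it started there or arrived via a pebbling step, so there are at most $b_c$ plus the number of pebbling steps slide moves. Hence Left makes at most $b_c+2(b_c+b_\ell-1)=3b_c+2b_\ell-2$ moves, and equality is realised by sliding the centre empty one pebble at a time and then repeatedly pebbling a leaf and sliding the resulting centre pebble straight back out.

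For parts (2)--(6) I would, case by case, list both players' legal moves and let the blocking rule eliminate nearly all of them. In (3) the lone red pebble can never take part in a pebbling step, and in (6) Right can never pebble the red leaf because the centre carries a blue pebble; in each of these cases Right has no move at all, the value is a $\{\,\cdot\mid\,\}$ integer, and the inductive hypothesis (together with the extension of part (1)'s formula to a leaf with no pebbles of the colour in play) evaluates it to $2a-2$, respectively $3a+2b-5$. In (2), (4) and (5) a forced Left line and a forced Right line each land on a position already covered — by an earlier part, by part (1), or by a colour-reverse of one of these — and the two values so obtained are the Left and Right stops; after checking that these options are not reversible and that no other option dominates, one reads off $*$, $\{\,2a-6\mid 0\,\}$ and $\{\,a-1\mid -(3(b+c)-5)\,\}$.

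The main obstacle is the bookkeeping the blocking rule forces, and above all the degenerate positions meeting none of the six hypotheses: a slide that empties the centre, or a pebbling step that drops a leaf below two pebbles of the colour in play. Each such family needs its own short lemma — for instance an all-blue in-star with empty centre, or $[(0,0),[a,0],[0,c]]$ with $c\ge 2$ — and, more delicately, once those boundary values are substituted one must confirm that the move expected to be dominant really is: it is not always the slide, because sliding into an empty centre can create an infinitesimally valued position that a pebbling option beats. Carrying out these comparisons, and verifying non-reversibility so the answers come out as exactly the stated switches rather than infinitesimally perturbed versions of them, is where almost all the work lies.
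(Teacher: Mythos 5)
Your overall plan is sound, and for parts (2)--(6) it follows essentially the same route as the paper: a simultaneous induction in which one lists each player's legal moves, lets the blocking rule eliminate most of them, evaluates the surviving options by the inductive hypothesis (or by an earlier part, or by colour symmetry), and reads off the value. Where you genuinely diverge is part (1). The paper disposes of it with a one-line induction (``move one centre pebble to a leaf, giving value $3b_c+2b_\ell-3$ by induction''), which silently leans on the boundary case $b_c=1$, where the resulting position has an empty centre and is not covered by the stated hypothesis. Your two-sided count --- at most $b_c+b_\ell-1$ pebbling steps since each destroys a pebble; at most $b_c$ plus the number of pebbling steps slides, since each slide evacuates a centre pebble that either started there or arrived by a pebbling step; equality via the slide-then-pebble-and-return line --- computes the longest Left line of play directly, and since Right never moves in an all-blue position the value is exactly that count. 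This covers the empty-centre positions uniformly and is tighter than the paper's argument.

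Two slips need repair before this becomes a proof. First, the value you quote for part (5), $\left\{a-1 \mid -(3(b+c)-5)\right\}$, is not the one in the statement and cannot be right: the position $[(0,0),[a,0],[0,b]]$ has no parameter $c$. The colour-symmetry method you describe does give the correct answer --- Left's dominant option $[(1,0),[a-2,0],[0,b]]$ has value $3+2(a-2)-5=2a-6$ by part (6), and negating under the colour swap gives the Right stop $-2b+6$ --- so this is a transcription error rather than a methodological one, but as written the conclusion of (5) is wrong. Second, in part (6) the claim that ``Right has no move at all'' holds only for the initial position: once Left slides the last blue pebble off the centre, Right may pebble the red leaf, so Left options such as $[(0,0),[a+b,0],[0,c]]$ are stars or switches rather than all-blue integer positions. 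You flag exactly this danger in your final paragraph; the fix is to run the dominance comparison through the options that keep a blue pebble on the centre (which are integers $3a+2b-6$ by induction) whenever such options exist, and to check the small corners like $a=b=1$, where the only Left option is infinitesimally close to $0$ and $\left\{\,G^L\mid\,\right\}$ still evaluates to $3a+2b-5=0$, by hand.
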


\bigskip

\begin{proof} For (1), moving a single pebble from the center vertex to a leaf is one of the two best moves for Left;  Right has no move.  By induction, this position has value $\{3b_{c} + 2b_{\ell} - 3 | \ \} = 3b_{c} + 2b_{\ell} - 2$ as required.

For the first half of case (2), observe that no player can move from $[(0,0),[0,1],[1,0]]$; hence, the game value is 0. The proofs of the remaining cases are by simultaneous induction. 

For the position  $[(0,0),[0,2],[2,0]]$ in case (2), Left's only move is to $[(1,0),[0,2],[0,0]]$.  Right has no move from this position, but Left can move to $[(0,0),[1,0],[0,2]]$.  From here, Left has no move and Right can move to $[(0,1),[1,0],[0,0]]$.  It is easy to check that this position has value $-1$. Hence, $[(0,0),[1,0],[0,2]]$ has value $-2$, and thus $[(1,0),[0,2],[0,0]]$ has value $0$.  By symmetry $[(0,1),[0,0],[2,0]]$ also has value $0$.  Therefore, $[(0,0),[0,2],[2,0]]$ has value $*$.

Case (3) can be regarded as a case belonging to case (1) as Right has no move.  Hence, $[(0,0),[a,0],[0,1]]$ has value $2a-2$ for $a \geq 2$.

In case (4), Left can move the position $[(0,0),[a,0],[0,2]]$ to $[(1,0),[a-2,0],[0,2]]$, which has value $3+2(a-2) - 5 = 2a - 6$, by induction and case (6).  Right can move the position $[(0,0),[a,0],[0,2]]$ to $[(0,1),[a,0],[0,0]]$.  It is easy to check that the last position has value $0$.   Therefore, for $a \geq 3$, the position $[(0,0),[a,0],[0,2]]$ has value $\{2a-6 | 0 \}$.

Note that (5) follows by induction and case (4).  Similarly, (6) follows by induction and cases (4) and (5).   

\end{proof}

\bigskip

The next result deals with bLue/Red Pebbling on a directed path.


\bigskip

\begin{Theorem} Each of the following describes the game value for the given pebbling distribution on $P_3$ with edges directed from left-to-right.
\begin{enumerate}
\item For $a \geq 0$ and  $b, c\geq 1$, the position $[[a,0],[b,0],[0,c]]$ has value $0$, if $a = 0$ and $b = 1$ and has value $2a+3b - 5$, otherwise.

\item For $a, b \geq 1$ and $c \geq 0$, the position $[[a,0],[0,b],[0,c]]$ has value $0$, if $b = 1$ and $c = 0$, has value $-2b-3c+2$, if $a=1$, and has value $-2b-3c+4$, otherwise.

\item For $b \geq a \geq 1$, the position $[[a,0],[0,0],[0,b]]$ has value $-3b+2$, if $a = 1$, has value $\{ 0|-3b+5 \}$, if $a = 2$, and has value $\{ 2a-6 |-3b+5 \}$, otherwise.

\end{enumerate}

\end{Theorem}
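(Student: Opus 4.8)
The plan is to prove items (1)--(3) simultaneously by induction on the potential $\Psi = 2(\text{total number of pebbles}) + \sum_x \ell(x)(b_x+r_x)$, where, writing $P_3$ as $u\to v\to w$, we set $\ell(u)=0,\ \ell(v)=1,\ \ell(w)=2$. A one-line check of the four move types (slide blue/red $v\to u$ or $w\to v$, push $u\to v$ or $v\to w$) shows each legal move strictly decreases $\Psi$, so this is a genuine well-founded induction (and records that the game terminates). The first step in each case is just to list the legal moves, and the decisive observation is which player is shut out by the blocking rule. In (1), the $b\ge 1$ blue pebbles on $v$ forbid every red move — Right's red pebbles live only on $w$, cannot slide onto the blue-occupied $v$, and $w$ has no out-neighbour — so these positions have the form $\{\,\text{something}\ \mid\ \}$ and hence integer value. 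Dually, in (2) the $b\ge1$ red pebbles on $v$ block Left's only conceivable move, the push $u\to v$, so those positions have the form $\{\ \mid\ \text{something}\,\}$. In (3) the middle vertex is empty, so Left's push $u\to v$ and Right's slide $w\to v$ are both live and a switch results.

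For item (3), the hub of the recursion, Left's unique move from $[[a,0],[0,0],[0,b]]$ is the push to $[[a-2,0],[1,0],[0,b]]$ (only when $a\ge2$), a family-(1) position of value $2(a-2)+3\cdot1-5=2a-6$ when $a\ge3$ and of value $0$ (the special $a'=0,b'=1$ sub-case of (1)) when $a=2$; when $a=1$ Left has no move. Right's moves are the slides to $[[a,0],[0,j],[0,b-j]]$, $1\le j\le b$, all family-(2) positions, of values $j-3b+4$ by item (2); these increase in $j$, so $j=1$ (value $-3b+5$, or $0$ when $b=1$) dominates. One then checks neither the Left option nor the $j=1$ Right option is reversible — each leaves the opposing player with no move, since the newly occupied middle vertex blocks — so $\{2a-6\mid -3b+5\}$, $\{0\mid -3b+5\}$, $-3b+2$ are already canonical, and $2a-6>-3b+5$ (for $a,b\ge3$), $0>-3b+5$ (for $b\ge2$) confirm these are real switches. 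Items (1) and (2) follow the same template plus the Simplicity Theorem: in (1), Left's best move is to slide exactly one blue pebble $v\to u$ (tying, when $a\ge2$, with the push $u\to v$), landing in a family-(1) position of value $2a+3b-6$, so $\{2a+3b-6\mid\ \}=2a+3b-5$; the exceptional value $0$ at $a=0,b=1$ appears because the only move then empties $v$, landing in the family-(3) position $[[1,0],[0,0],[0,c]]$ of value $-3c+2\le-1$, and $\{-3c+2\mid\ \}=0$. Item (2) is dual: Right's best move is the push $v\to w$ or the slide of one red $w\to v$ (equal value $-2b-3c+5$), giving $-2b-3c+4$; here the $a\ge2$ value exceeds the $a=1$ value by exactly $2$ because, once Right's pushes clear the red off $v$, Left's push $u\to v$ becomes available — a \emph{discovered move} — and the $b=1,c=0$ position simply has no move for either side.

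The main obstacle is the handful of options that leave the three stated families, chiefly (in (1)) the move emptying $v$ — and, when $b=1$, the only available slide — which produces $[[m,0],[0,0],[0,c]]$ with $m>c$, not covered by (3) since (3) assumes the red count dominates the blue count, together with the one-level-deeper positions $[[m,0],[0,1],[0,c]]$. The plan is not to evaluate these exactly but to show they never help the mover. Domination can fail here — for instance $[[2,0],[0,0],[0,1]]=*$, incomparable with $0$ — so instead I would establish only the weaker fact the Simplicity Theorem actually needs: no Left option of a family-(1) position is $\ge$ its claimed value (dually for (2)). This uses just two cheap inputs: removing the opponent's pebbles can only help the mover, so $[[m,0],[0,0],[0,c]]\le[[m,0],[0,0],[0,0]]$, where a separate short induction gives $[[m,0],[0,0],[0,0]]=2(m-1)$; and once $v$ is emptied the opponent's pebbles on $w$ can slide onto $v$ and block, so the escape positions have the shape $\{\,\text{small}\ \mid\ \le 0\,\}$ and cannot reach the claimed bound. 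Grinding this off-family bookkeeping through every sub-case is the tedious part; the on-family recursion above is routine.
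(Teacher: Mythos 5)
Your proposal is correct and follows the same skeleton as the paper's proof: a simultaneous induction over the three families (the paper inducts on game-tree height, you on an equivalent potential), identification of which player is shut out by the blocking rule in each family, and the observation that the dominant moves are the one-pebble slide onto, or push off, the middle vertex, each dropping the value by exactly one, so the Simplicity Theorem finishes each case. The one place you genuinely go beyond the paper is the off-family options. The paper's argument for (1) asserts that both Left options $[[a+1,0],[b-1,0],[0,c]]$ and $[[a-2,0],[b+1,0],[0,c]]$ ``have value $2a+3b-6$ by induction,'' but when the slide empties $v$ (i.e.\ $b=1$) the resulting position $[[a+1,0],[0,0],[0,c]]$ is not covered by any of the three formulas and need not be that number at all --- e.g.\ $[[2,0],[0,0],[0,1]]=*$, incomparable with $0$ --- and the analogous issue arises with Right's $v$-emptying push in family (2). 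Your fix --- prove only that no such option is $\geq$ (resp.\ $\leq$) the claimed value, using the opponent's blocking reply onto the vacated middle vertex to pin the relevant stop at or below $0$ --- is exactly the condition the Simplicity Theorem requires, and it closes this gap. One caution: your first ``cheap input,'' the monotonicity claim that deleting the opponent's pebbles can only help the mover, is itself unproved (plausible, but not a one-line fact in a game with blocking) and is in any case too weak for the purpose, since $[[a+1,0],[0,0],[0,c]]\leq 2a$ does not rule out $\geq 2a-2$; your second input alone does the job, so lead with that and drop the first.
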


\bigskip

\begin{proof} All claims will be proven simultaneously using induction (on the height of the game tree).  We start with (1) and the special case where $a=0$ and $b=1$.  Right has no move from $[[0,0],[1,0],[0,c]]$, but Left does.  Left can move to $[[1,0],[0,0],[0,c]]$.  From this position, Left now has no move (no matter what Right does) .  A bit of thought tells us that Right's best move is to $[[1,0],[0,1],[0,c-1]]$.  Right can follow this move up with $c-1$ other moves of this type, yielding the position $[[1,0],[0,c],[0,0]]$.  Right's next move is to $[[1,0],[0,c-2],[0,1]]$.  By induction (using (2)), this position has value $-2c+3$.  This then implies that $[[1,0],[0,0],[0,c]]$ has value $-3c+3$.  Hence, $[[0,0],[1,0],[0,c]]$ has value $0$ as desired.

We now show that $[[a,0],[b,0],[0,c]]$ has value $2a+3b-5$.  Note that Right has no move from this position.  Left's best moves are to $[[a+1,0],[b-1,0],[0,c]]$ and to $[[a-2,0],[b+1,0],[0,c]]$, depending on whether or not $a \geq 2$.  By induction, both $[[a+1,0],[b-1,0],[0,c]]$ and $[[a-2,0],[b+1,0],[0,c]]$ have value $2a+3b-6$.  Hence, the starting position, $[[a,0],[b,0],[0,c]]$, has value $2a+3b-5$.

\bigskip

We now prove (2). If $b=1$ and $c=0$, then neither Left nor Right has a move from $[[a,0],[0,b],[0,c]]$.  If $b \geq 2$ or $c \geq 1$, then Left has no move from $[[a,0],[0,b],[0,c]]$, but Right does.  If $a = 1$ and $c \geq 1$, then Right's best move is from $[[1,0],[0,b],[0,c]]$ to $[[1,0],[0,b+1],[0,c-1]]$.  By induction, $[[1,0],[0,b+1],[0,c-1]]$ has value $-2b-3c +3$.  Hence, $[[1,0],[0,b],[0,c]]$ has value $-2b-3c+2$.

If $a = 1$, $b \geq 2$, and $c=0$, then Right can move from $[[1,0],[0,b],[0,0]]$ to $[[1,0],[0,b-2],[0,1]]$.  The last position has value $-1$, if $b = 2$ and has value $-2(b-2) - 3 + 2 = -2b + 3$, if $b > 2$.  Thus, $[[1,0],[0,b],[0,0]]$ has value $-2$ or $-2b + 2$, respectively.

If $a > 1$, $b=1$, and $c \geq 1$, then Right can move from $[[a,0],[0,1],[0,c]]$ to $[[a,0],[0,2],[0,c-1]]$. Continuing with reasoning similar to the above (for $a = 1$ and $c \geq 1$), we see that the position $[[a,0],[0,2],[0,c-1]]$ has value $-3c+3$.  Thus, $[[a,0],[0,1],[0,c]]$ has value $-3c+2$, as Left has no option from this position.

The case $a > 1$, $b \geq 2$ is similar to other calculations done above.

\bigskip  

Let us now focus on (3).  When $a = 1$, we see that Left has no move from $[[1,0],[0,0],[0,b]]$.  Right's best move is to $[[1,0],[0,1],[0,b-1]]$.  If $b = 1$, then the last position has value 0.  Hence, $[[1,0],[0,0],[0,b]]$ has value $-1$.  If $b > 1$, then the position $[[1,0],[0,1],[0,b-1]]$ has value $-3b+3$ by induction.  Thus, $[[1,0],[0,0],[0,b]]$ has value $-3b+2$ as required.


When $a = 2$ and $b \geq 2$, both players have a move from the position $[[2,0],[0,0],[0,b]]$.  Left's only move is to $[[0,0],[1,0],[0,b]]$. This position has value $0$ by case (1).  Right's best move from $[[2,0],[0,0],[0,b]]$ is to $[[2,0],[0,1],[0,b-1]]$.  By induction, $[[2,0],[0,1],[0,b-1]]$ has value $-3b+5$.  Thus $[[2,0],[0,0],[0,b]]$ has value $\{ 0 | -3b+5 \}$.

The last case we need to look at is $a \geq 3$.  The only Left move from $[[a,0],[0,0],[0,b]]$ is to $[[a-2,0],[1,0],[0,b]]$.  By induction, this position has value $2a - 6$.  Among Right's many moves, the move to $[[a,0],[0,1],[0,b-1]]$ is the best.  By induction, this position has value $-3b+5$.  Hence, $[[a,0],[0,0],[0,b]]$ has value $\{ 2a-6 |-3b+5 \}$. 

\end{proof}

We end this section with a short discussion of the differences between \textsc{blocking pebbles} and \textsc{hackenbush}.

As noted above, the blocking mechanic of \textsc{blocking pebbles} results in a preponderance of switches, while \textsc{hackenbush} has no such positions. Also, while we would be surprised to find a dyadic that is not the game value for some \textsc{blocking pebbles} position, we have found it difficult to find even computationally. \textsc{hackenbush} positions, on the other hand, are easily constructed that have rational non-integer game values.

\section{Green-only games}\label{sec:green}

The game of \textsc{Blocking Pebbles} restricted to green pebbles is an impartial game, with positions admitting only nimbers as game values. The interested reader will seek out~\cite{berlekamp2003winning,albert2007lessons} for more on Sprague-Grundy Theory and nimbers. While there is no use for players to employ a blocking strategy, the game remains mathematically interesting for its connections to its roots in Graph Pebbling. 

First we consider in-stars and out-stars, with green pebble distributions denoted by $>g_0,g_1,\ldots ,g_n<$ and $<g_0,g_1,\ldots ,g_n>$ respectively. In each case $g_i\geq 0$ and $g_0$ is the number of pebbles on the center vertex.

\begin{Theorem}\label{thm:greeninstar}
The value of an in-star with distribution $>g_0,g_1,\ldots ,g_n<$ is $*g_0$.
\end{Theorem}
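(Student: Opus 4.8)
The plan is to prove the slightly stronger statement that the Sprague--Grundy value of any green-only in-star position $>g_0,g_1,\ldots,g_n<$ is exactly $g_0$; since green-only \textsc{blocking pebbles} is impartial, this is equivalent to the position having value $*g_0$. The argument will be an induction on the quantity $\Phi = 3g_0 + 2\sum_{i=1}^{n} g_i$.

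First I would pin down the move set. In an in-star the centre is the unique sink, each leaf is an in-neighbour of the centre, and the centre is the only out-neighbour of each leaf. Consequently the only moves available at the centre are of the first type in Ruleset~\ref{rules}: for each $j$ with $1 \le j \le g_0$, push $j$ green pebbles onto a single leaf, reaching a position whose centre count is $g_0 - j$. The only moves available at a leaf are of the second type: if a leaf holds at least two pebbles, delete two of them and add one to the centre, reaching a position whose centre count is $g_0 + 1$. A centre push of $j$ pebbles changes $\Phi$ by $-3j + 2j = -j < 0$, and a leaf pebbling move changes $\Phi$ by $3 - 4 = -1 < 0$, so $\Phi$ is a non-negative integer that strictly decreases with every move; hence the game terminates and $\Phi$ is a legitimate induction parameter.

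For the inductive step, take a position with centre count $g_0$. If $g_0 \ge 1$, then for every $k \in \{0,1,\ldots,g_0-1\}$ the centre push of $j = g_0 - k$ pebbles onto the first leaf produces a position of smaller $\Phi$ with centre count $k$, so by the induction hypothesis it has Grundy value $k$; thus every nimber below $g_0$ occurs among the options. Every leaf pebbling move produces a position with centre count $g_0 + 1$, of Grundy value $g_0 + 1$, which differs from $g_0$. Hence the set of option values contains $\{0,\ldots,g_0-1\}$ and avoids $g_0$, so its mex equals $g_0$. When $g_0 = 0$ there are no centre moves, the only possible option is a leaf pebbling move of value $1$, and the mex is again $0 = g_0$. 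This completes the induction, giving value $*g_0$.

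The calculation is routine once the move set is identified; the one point that needs thought is the choice of induction parameter, because a single move can both raise and lower the centre count and can change the total number of pebbles in either direction. The weighted potential $3g_0 + 2\sum g_i$ — the same expression that governs the blue-only in-star in the previous result — is what resolves this, and I would also take a moment to treat the degenerate cases ($g_0 = 0$, and an in-star with a single leaf) where the option set may be empty.
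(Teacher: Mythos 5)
Your proof is correct, and it takes a genuinely different technical route from the paper's. The paper also argues by induction that the in-star behaves as a \textsc{nim} heap of size $g_0$, but it disposes of the leaf-to-centre moves (which \emph{increase} $g_0$) by a reversibility argument in the style of \textsc{poker nim}: any such move can be countered by pushing the newly arrived centre pebble back onto the same leaf at a net cost of one pebble, so these moves cannot affect the value, and the remaining moves (centre pushes) realize exactly the centre counts below $g_0$, i.e.\ the \textsc{nim} moves. You instead compute the Grundy value directly as a mex, and the price of doing so is that you need a well-founded induction that also covers the $g_0$-increasing options; your weighted potential $\Phi = 3g_0 + 2\sum g_i$ supplies this, after which the leaf moves are dismissed simply because their inductively known value $g_0+1$ differs from $g_0$ and so cannot change the mex. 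Your version is more self-contained and repairs a small looseness in the paper, whose induction is nominally ``on $g_0$'' even though moves can raise $g_0$; the paper's version is shorter and makes explicit the \textsc{poker nim} mechanism that it reuses in Theorems \ref{thm:greenoutstar} and \ref{thm:greenpath} and in Proposition \ref{prop:reduction}.
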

\begin{proof}
We will demonstrate this using induction on $g_0$. First note that if $g_0=0$ then any move of a green pebble to the center from a leaf, resulting in the loss of a pebble, can be countered by returning it to the same leaf. Next we note that any move from $>g_0,g_1,\ldots ,g_n<$ results in a change to $g_0$, and that there is a move from this position that results in any number of pebbles on the center node strictly less than $g_0$. Hence the in-star is equivalent to a \textsc{nim} heap of size $g_0$.
\end{proof}

The fact revealed in Thm. \ref{thm:greeninstar} that an in-star is equivalent to a single \textsc{nim} heap can be generalized to multiple heaps with an out-star.

\begin{Theorem}\label{thm:greenoutstar}
The value of an out-star with distribution $<g_0,g_1,\ldots ,g_n>$ is $*(\sum \limits_{i=1}^n \oplus g_i)$. That is, the nim sum of all even heaps.
\end{Theorem}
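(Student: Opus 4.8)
The plan is to treat the green-only game as an impartial game and compute its Sprague--Grundy value by induction on a suitable potential. Write $s = g_1 \oplus g_2 \oplus \cdots \oplus g_n$ for the nim-sum of the leaf heaps; the goal is to prove that the out-star $\langle g_0, g_1, \dots, g_n \rangle$ has Grundy value $s$, \emph{independently} of $g_0$. First I would list the moves explicitly. Every edge points from the centre to a leaf, so the centre has no in-neighbour and its only move is the pebbling move (Rule~2): when $g_0 \ge 2$, remove two pebbles from the centre and add one to some leaf $\ell_j$, i.e.\ $(g_0,\dots,g_j,\dots) \mapsto (g_0-2,\dots,g_j+1,\dots)$. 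A leaf has no out-neighbour, so its only move is Rule~1: slide any $k \ge 1$ of its pebbles back to the centre, i.e.\ $(g_0,\dots,g_i,\dots) \mapsto (g_0+k,\dots,g_i-k,\dots)$.

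Next I would set up the induction. The quantity $\Phi = 2g_0 + 3\sum_{i=1}^n g_i$ is a non-negative integer that strictly decreases on every move (a leaf move of $k$ pebbles changes it by $-k$, a centre move by $-1$), so strong induction on $\Phi$ is legitimate. In the inductive step, the inductive hypothesis says that every option has Grundy value equal to the nim-sum of its own leaf heaps, and then two facts finish the proof. First, no option has Grundy value $s$: a leaf move changes the leaf nim-sum to $s \oplus g_i \oplus (g_i-k)$, which is $\ne s$ because $k \ge 1$; a centre move changes it to $s \oplus g_j \oplus (g_j+1) \ne s$ because $g_j \ne g_j+1$. Second, every $t$ with $0 \le t < s$ is realised by some option: if $p$ is the index of the highest bit in which $s$ and $t$ differ, then that bit is set in $s$ (since $t<s$), so some leaf heap $g_i$ has bit $p$ set and $g_i' := g_i \oplus s \oplus t < g_i$; sliding $k = g_i - g_i' \ge 1$ pebbles from $\ell_i$ to the centre gives an option whose leaf nim-sum is $s \oplus g_i \oplus g_i' = t$, hence (by induction) of Grundy value $t$. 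Consequently the Grundy values of the options contain $\{0,1,\dots,s-1\}$ and miss $s$, so the mex is $s$. The base case is the position with no legal moves --- which forces $g_1 = \cdots = g_n = 0$ and $g_0 \le 1$, so $s = 0$ --- and it is just the empty-mex case, requiring nothing extra.

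The hard part is really only the choice of potential: the naive induction on the total pebble count fails, since a leaf-to-centre move leaves that count unchanged, so one needs a weighting like $\Phi$ that penalises leaf pebbles more heavily than centre pebbles. Everything after that is the standard Nim mex-argument, with the single wrinkle --- absent in ordinary Nim --- that one must also rule out the centre's pebbling moves producing Grundy value $s$, which is immediate since such a move strictly increases a leaf heap. I would close with the remark that the argument shows the centre pebbles never affect the value: the centre is a mere reservoir, chargeable from the leaves and dischargeable two-for-one back onto them, which is the out-star analogue of the role the centre plays in the in-star of Theorem~\ref{thm:greeninstar}.
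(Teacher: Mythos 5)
Your proof is correct, but it takes a genuinely different route from the paper's. The paper argues by analogy with \textsc{poker nim}: it observes that the centre acts as a reserve, that any centre-to-leaf move can be immediately reversed by sliding the added pebble back (at a net cost of one centre pebble, so the game is not loopy), and concludes that the value is the nim-sum of the leaf heaps because the winning player can simply play the \textsc{nim} strategy on the leaves. That argument is short but informal --- as written it most directly identifies the $\mathcal{P}$-positions and leans on the reader's familiarity with reversibility and \textsc{poker nim} to get the full Grundy value. You instead give a self-contained Sprague--Grundy computation: an explicit enumeration of the two move types, the weighted potential $\Phi = 2g_0 + 3\sum g_i$ to make strong induction legitimate (the key technical idea, since the raw pebble count is conserved by leaf moves), and then the standard mex argument showing no option has value $s$ while every $t<s$ is attained by a leaf move. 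The one wrinkle you correctly handle that ordinary \textsc{nim} lacks is ruling out the centre's two-for-one moves from producing value $s$, which follows since such a move strictly changes a leaf heap. Your version buys rigor and makes the independence from $g_0$ an explicit conclusion of the induction rather than a consequence of a reversibility heuristic; the paper's version buys brevity and a conceptual link to \textsc{poker nim} that it reuses for Theorem~\ref{thm:greenpath} and Proposition~\ref{prop:reduction}.
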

\begin{proof}
We note that this game is analogous to \textsc{nim}, except instead of removing pebbles from a heap they are moved to the center at no cost. The player with the advantage simply plays the winning \textsc{Nim} strategy. Any move of a pebble from the center vertex to a lead can immediately be reversed, at a net cost of one pebble from the center. Thus the number of pebbles at the center do not contribute to the game value, which equals the nim sum of the leaf heaps.
\end{proof}

On a path we get a similar result. 

\begin{Theorem}\label{thm:greenpath}
If $(g_1,\ldots ,g_n)$ is a distribution of green pebbles along a path directed left to right, then the game value is $*(\sum \oplus g_{2k})$.
\end{Theorem}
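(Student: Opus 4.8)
The plan is to show that a directed path $P_n$ with green pebbles $(g_1,\dots,g_n)$ decomposes, for Grundy-value purposes, into independent components indexed by the \emph{even} positions, each behaving like an out-star leaf feeding a sink, while the \emph{odd} positions contribute nothing. Label the vertices $1,2,\dots,n$ with arcs $i\to i+1$. A pebbling move at vertex $i$ removes two pebbles from $i$ and places one on $i+1$; there is no ``move toward an in-neighbor'' option here because the only legal moves in the green game are the pebbling moves (rule 2), so the game on a path is genuinely just graph pebbling played as a Nim-like game. The key structural observation is that pebbles on vertex $i$ can only ever influence vertices $i+1, i+2, \dots$, and each pebbling move at $i$ \emph{halves} (roughly) the pebble count while advancing by one vertex; so a pebble starting at an odd vertex and a pebble starting at an even vertex never interact in a way that matters to the outcome. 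I would make this precise by induction on $n$.

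First I would handle the base cases $n=1$ and $n=2$ directly: for $n=1$ there are no moves, value $*0$; for $n=2$ the only moves pebble from vertex $1$ to vertex $2$, and since moving pebbles to the sink $2$ is exactly like removing them from a Nim heap of size $g_1$ sitting at vertex $1$ except that we may only remove in pairs — wait, that already shows the subtlety: a single pebble at vertex $1$ is \emph{dead}. So the heap at an even-indexed endpoint contributes via $g_{\text{even}}$ directly, whereas an odd vertex's heap contributes only through what it can push downstream. For the inductive step, I would argue that vertex $n$ (if $n$ is even) acts as a pure Nim heap of size $g_n$ that is in a separate component: no move ever adds to vertex $n$ except a pebbling move from vertex $n-1$, but I claim such moves can be ``ignored'' because they can be mirrored — more carefully, I would peel off vertex $1$: a move at vertex $1$ sends us from $(g_1,g_2,\dots)$ to $(g_1-2,g_2+1,\dots)$, and I would show by a strategy-stealing/mirroring argument (as in Theorems~\ref{thm:greenoutstar} and~\ref{thm:greeninstar}) that pebbles on vertex $1$ never change the Grundy value, reducing to the path on vertices $2,\dots,n$, whose value by induction is $*(\bigoplus_k g_{2k})$ where now the even positions of the shortened path are the original positions $2,4,\dots$, giving exactly $*(\sum\oplus g_{2k})$.

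The cleanest route is probably to prove a slightly stronger auxiliary claim first: \emph{on a directed path, the Grundy value is unchanged if we delete all pebbles at vertex $1$ (the source), provided $g_1$ was even}; and if $g_1$ is odd, delete one pebble first (leaving one dead pebble at vertex $1$, which truly is inert since a lone pebble can never move) and then all the rest. This reduces $P_n$ to $P_{n-1}$ on vertices $2,\dots,n$. Iterating, odd-indexed vertices get stripped and even-indexed vertices become the source of successively shorter paths until they too are stripped or become isolated Nim heaps. To justify the auxiliary claim I would exhibit the pairing strategy: whenever the opponent plays a pebbling move at vertex $1$ (turning $g_1\mapsto g_1-2$, $g_2\mapsto g_2+1$), respond by reversing a pebbling move — but pebbling is not reversible, so instead I mirror it by making the \emph{same} net effect on the rest of the board another way, or I argue directly that the position with $g_1$ replaced by $g_1 \bmod 2$ has a Grundy-preserving correspondence of options. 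This mirroring/independence step is the main obstacle: unlike the out-star case where every leaf feeds the same center and the reversal ``send a pebble back out'' is available, on a path the pebbles march strictly forward, so I cannot literally undo a move. The honest fix is likely an induction that computes the full option set of $[(g_1,\dots,g_n)]$, observes that every option changes some $g_{2k}$ by a controllable amount \emph{and} that options touching only odd-indexed heaps leave $\bigoplus_k g_{2k}$ of the reduced path invariant, then invokes the Sprague--Grundy characterization ($*m$ is the value iff every value $<m$ appears among options and $*m$ itself does not). Establishing exactly which nim-values are reachable — in particular that moving pebbles from an odd vertex $2k-1$ to the even vertex $2k$ can realize any $\bigoplus$-value strictly below the current one but cannot fix it — is the combinatorial heart of the argument, and I would spend most of the proof there, treating the endpoint parity ($n$ even vs.\ odd) as a final bookkeeping case split.
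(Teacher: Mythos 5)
Your proposal rests on a misreading of Ruleset~\ref{rules} that makes the whole approach unworkable. You assert that in the green-only game ``the only legal moves \dots are the pebbling moves (rule 2),'' so that pebbles ``march strictly forward.'' But move (1) applies to green pebbles too: a player may move any positive number of green pebbles from a vertex to an in-neighbour, i.e.\ \emph{backward} along the path. These backward moves are not incidental; they are the entire mechanism of the theorem. Reducing $g_{2k}$ by sliding pebbles from vertex $2k$ back to vertex $2k-1$ is precisely a \textsc{nim} move on the heap $g_{2k}$ (this is how every nim-sum below $\bigoplus g_{2k}$ is realized among the options), and when the opponent inflates an even heap --- either by pebbling forward from $2k-1$ or by retreating pebbles from $2k+1$ --- the response is to push those pebbles one step further back onto an odd vertex, restoring the nim-sum. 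Under your reading the statement is simply false: on $P_2$ with $(g_1,g_2)=(0,1)$ there would be no legal move and the value would be $0$, whereas the theorem gives $*1$, realized exactly because the lone pebble can be retreated to the source. This is also why your mirroring step stalls (``pebbling is not reversible \dots I cannot literally undo a move''): the reversal you are looking for is a rule-(1) move you have excluded from the game.

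Even read charitably, the proposal defers rather than supplies the key step: you close by saying that determining which nim-values are reachable ``is the combinatorial heart of the argument,'' without carrying it out. Once the move set is correct that heart is short, and the paper's proof is different in shape from your vertex-peeling induction: it takes a minimal counterexample (fewest total pebbles, lexicographically last among those), notes that every legal move either destroys a pebble or shifts pebbles leftward and hence increases lexicographic rank, so all options already satisfy the formula; every move changes exactly one even-indexed heap and therefore changes $\bigoplus g_{2k}$, while backward moves off even vertices realize every smaller nim-sum; the mex is then $\bigoplus g_{2k}$. Your plan of stripping the source vertex could likely be repaired into a valid induction, but only after correcting the ruleset and proving that the sum-increasing moves are reversible.
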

\begin{proof}
An empty path is trivial, so let's assume the claim is false and consider the set $C$ of all counter-examples with the fewest total number of pebbles. From $C$ let $(g_{0_1},g_{0_2},\ldots ,g_{0_n})$ be the last when ordered lexicographically. Any move from this position either decreases the total number of pebbles, or increases its lexicographic position. Therefore all options of $(g_{0_1},g_{0_2},\ldots ,g_{0_n})$ are outside $C$ and hence the claim holds for them. Since each has a digital sum of even terms that differs from $(\sum \limits_{\oplus} g_{2k})$, and all smaller sums are realized through \textsc{nim} moves on the even heaps, we see that $(g_{0_1},g_{0_2},\ldots ,g_{0_n})$ also satisfies the claim. Therefore, $C$ is empty and the claim is true.
\end{proof}

Note that in Theorems \ref{thm:greeninstar}, \ref{thm:greenoutstar}, and \ref{thm:greenpath}, the strategy is equivalent to \textsc{nim}. In fact, in these particular cases \textsc{blocking pebbles} is very similar to the game of \textsc{poker nim}, wherein players make \textsc{nim} moves but retain any removed pebbles, and may add them to a heap instead of removing. While \textsc{poker nim} is loopy and \textsc{blocking pebbles} is not, both games played optimally have the same strategy and the same reciprocal moves for non-\textsc{nim} moves.

We now introduce a reduction formula for all trees, which can be applied to the three previous results.

\begin{Construction}\label{const:reduction}
Let $T$ be any oriented tree with a given distribution of green pebbles, let $S$ be its set of source vertices, and let $O$ be the set of vertices of $T$ reachable by an odd length directed path from some vertex in $S$. Additionally, for a given subset $W$ of verties, let $p(W)$ be the combined total number of pebbles on $W$. 

We construct the following digraph $D(T)$ from $T$ as follows:
\begin{enumerate}
    \item $V(D(T))=\{\sigma\}\cup O$, where $O$ has the same pebbling distribution as it does in $T$ and $\sigma$ is an vertex with no pebbles.
    \item $E(D(T))=E(O)\cup \{\sigma \rightarrow \theta | \theta\in O\}$
\end{enumerate}
\end{Construction}

\begin{figure}
\centering

\definecolor{ffffff}{rgb}{1.,1.,1.}
\begin{tikzpicture}[line cap=round,line join=round,>=triangle 45,x=1.0cm,y=1.0cm]
\clip(-3.44,-0.5) rectangle (7.3,3.42);
\draw [->] (-3.,2.) -- (-1.,1.);
\draw [->] (-3.,0.) -- (-1.,1.);
\draw [->] (-1.,1.) -- (1.,1.);
\draw [->] (1.,1.) -- (3.,1.);
\draw [->] (3.,1.) -- (5.,2.);
\draw [->] (3.,1.) -- (5.,0.);
\draw [->] (5.,2.) -- (7.,2.);
\draw [->] (0.,3.) -- (1.,1.);
\draw [->] (3.,1.) -- (5.,1.);
\draw [->] (-1.,1.) -- (1.,0.);
\begin{scriptsize}
\draw [fill=ffffff] (-3.,2.) circle (2.5pt);
\draw [fill=ffffff] (-1.,1.) circle (2.5pt);
\draw [fill=ffffff] (-3.,0.) circle (2.5pt);
\draw [fill=ffffff] (1.,1.) circle (2.5pt);
\draw [fill=ffffff] (3.,1.) circle (2.5pt);
\draw [fill=ffffff] (5.,2.) circle (2.5pt);
\draw [fill=ffffff] (5.,0.) circle (2.5pt);
\draw [fill=ffffff] (7.,2.) circle (2.5pt);
\draw [fill=ffffff] (0.,3.) circle (2.5pt);
\draw [fill=ffffff] (5.,1.) circle (2.5pt);
\draw [fill=ffffff] (1.,0.) circle (2.5pt);
\end{scriptsize}
\end{tikzpicture}

\definecolor{ffffff}{rgb}{1.,1.,1.}
\begin{tikzpicture}[line cap=round,line join=round,>=triangle 45,x=1.0cm,y=1.0cm]
\clip(-3.44,-0.5) rectangle (7.3,3.42);
\draw [->] (-1.,1.) -- (1.,1.);
\draw [->] (1.,1.) -- (3.,1.);
\draw [->] (3.,1.) -- (5.,2.);
\draw [->] (3.,1.) -- (5.,0.);
\draw [->] (5.,2.) -- (7.,2.);
\draw [->] (3.,1.) -- (5.,1.);
\draw [->] (2.1,3.06) -- (-1.,1.);
\draw [->] (2.1,3.06) -- (1.,1.);
\draw [->] (2.1,3.06) -- (3.,1.);
\draw [->] (2.1,3.06) -- (5.,2.);
\draw [->] (2.1,3.06) -- (7.,2.);
\draw [->] (2.1,3.06) -- (5.,1.);
\draw [->] (2.1,3.06) -- (5.,0.);
\begin{scriptsize}
\draw [fill=ffffff] (-1.,1.) circle (2.5pt);
\draw [fill=ffffff] (1.,1.) circle (2.5pt);
\draw [fill=ffffff] (3.,1.) circle (2.5pt);
\draw [fill=ffffff] (5.,2.) circle (2.5pt);
\draw [fill=ffffff] (5.,0.) circle (2.5pt);
\draw [fill=ffffff] (7.,2.) circle (2.5pt);
\draw [fill=ffffff] (5.,1.) circle (2.5pt);
\draw [fill=ffffff] (2.1,3.06) circle (4.5pt);
\end{scriptsize}
\end{tikzpicture}

\caption{An oriented tree $T$ and the resulting graph $D(T)$ from Const. \ref{const:reduction}}
\end{figure}
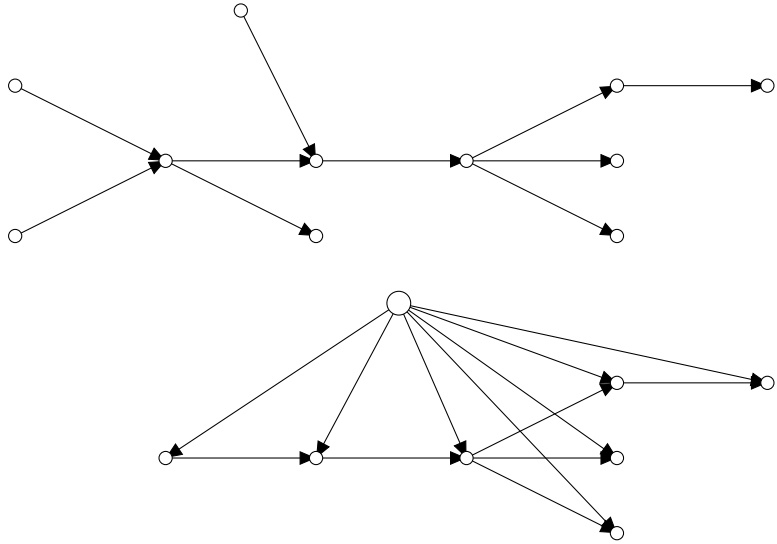

\begin{Proposition}\label{prop:reduction}
The game value of \textsc{blocking pebbles} on $T$ is equal to the game value on $D(T)$.
\end{Proposition}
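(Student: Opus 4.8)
The natural plan exploits impartiality: green-only \textsc{blocking pebbles} is impartial, so it is enough to show that the disjoint sum of $T$ and $D(T)$ is a $\mathcal P$-position (a second-player win), and the obvious vehicle is a pairing strategy that keeps the two components synchronised on the vertex set $O$.

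First I would record the structural fact that makes $D(T)$ a sensible object. In a finite oriented tree every vertex is reachable from a source by tracing in-edges backwards, so every vertex outside $O$ is reached from a source by an even-length directed path (sources being reached by the empty path). Hence, for every arc $u\to w$ of $T$, at least one of $u,w$ lies in $O$; that is, $V(T)\setminus O$ is an independent set, and every legal move of \textsc{blocking pebbles} on $T$ changes the pebble count of some vertex of $O$. This is exactly the mechanism behind the ``even heaps only'' behaviour of Theorems~\ref{thm:greeninstar}--\ref{thm:greenpath}, and it is what suggests that the vertices of $V(T)\setminus O$ collectively behave like the single pooled reservoir $\sigma$ of Construction~\ref{const:reduction}.

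Next I would set up a correspondence of positions: a pebble distribution on $T$ is sent to the distribution on $D(T)$ keeping the pebbles of $O$ fixed and sweeping every pebble of $V(T)\setminus O$ onto $\sigma$. The claim becomes that the second player in $T+D(T)$ can maintain the invariant that the $O$-parts of the two positions agree, together with a bookkeeping relation between the reservoir total of $T$ and the count on $\sigma$. One then classifies the first player's move. Moves changing the $O$-part ``the same way in both games'' are copied verbatim: a pebbling move along an arc of $E(O)$ in $T$ is legal in $D(T)$, and conversely; a cost-free move pushing pebbles from a vertex of $O$ up into a reservoir of $T$ is answered by pushing the same pebbles from that vertex to $\sigma$ in $D(T)$; a pebbling move from a reservoir $w$ of $T$ down onto some $x\in O$ is answered by the move $\sigma\to x$ in $D(T)$, provided $\sigma$ holds enough pebbles (this is where the bookkeeping relation is used), and symmetrically. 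The leftover moves — principally a cost-free move draining a reservoir (or $\sigma$) back into $O$, or a pebbling move dropping an $O$-pebble into a reservoir — must be shown \emph{reversible} in the \textsc{poker nim} style: the responder undoes the effect on the $O$-part inside the same component at a cost of at most one reservoir pebble, so the process terminates since the total pebble count strictly decreases. Standard Sprague--Grundy reasoning then forces the sum to be a $\mathcal P$-position.

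The step I expect to be genuinely delicate is the reservoir bookkeeping, specifically the mismatch in connectivity. In $D(T)$ the vertex $\sigma$ is an in-neighbour of \emph{every} vertex of $O$, whereas in $T$ a given $x\in O$ may have all of its in-neighbours inside $O$, so that in $T$ the pebbles on $x$ cannot be dumped upward at all while in $D(T)$ they can be moved to $\sigma$. Making the argument go through requires verifying that this extra escape move in $D(T)$ is reversible (its only effect on the heap at $x$ is one the opponent can immediately neutralise), and dually that the sparser feeding pattern in $T$ nevertheless simulates every $\sigma\to x$ move of $D(T)$; finding an invariant strong enough to cover both directions at once while remaining maintainable is the crux, and the point that demands the most care beyond the star and path cases.
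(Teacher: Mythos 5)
Your overall plan---reduce both games to \textsc{poker nim} on the heap set $O$ and certify $T+D(T)$ as a $\mathcal{P}$-position by a pairing strategy---is exactly the idea behind the paper's (two-sentence) proof, and your structural observations are correct: every vertex off $O$ is even-reachable from $S$, no arc joins two vertices of $V(T)\setminus O$, and every move touches a heap of $O$. The gap is the one you flagged yourself at the end, and it is not merely delicate: it cannot be closed, because the statement is false for general oriented trees. Take $T$ with arcs $s\to a$, $a\to b$, $s'\to b$, $b\to v$. Then $S=\{s,s'\}$ and $O=\{a,b,v\}$ ($a$ and $b$ lie at odd distance $1$ from $s$ and $s'$ respectively, and $v$ at odd distance $3$ from $s$), so $E(O)=\{a\to b,\ b\to v\}$ and $D(T)$ has arcs $\sigma\to a$, $\sigma\to b$, $\sigma\to v$, $a\to b$, $b\to v$. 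Put a single green pebble on $v$. In $T$ the pebble can only go to $b$, then to $a$ or $s'$, then (from $a$) to $s$; the Grundy values of ``one pebble on $s$'', ``on $s'$'', ``on $a$'', ``on $b$'' are $0,0,1,2$, so the position has value $\operatorname{mex}\{2\}=0$. In $D(T)$ the pebble on $v$ has the extra escape to $\sigma$, an option of value $0$, so the position has value $\operatorname{mex}\{2,0\}=1$. That extra move $v\to\sigma$ is precisely the escape you worried about, and it is not reversible here: a lone pebble on $\sigma$ is dead, and the new option changes the mex.

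The source of the failure is that a vertex of an oriented tree with several sources can be reachable by both odd- and even-length directed paths; this is exactly when $E(O)\neq\emptyset$ and when some $x\in O$ can have all of its in-neighbours inside $O$. If every vertex of $T$ has a well-defined parity of distance to $S$ (equivalently $E(O)=\emptyset$)---which holds for the in-stars, out-stars and directed paths to which the paper applies the construction---then every vertex of $O$ has a reservoir in-neighbour, every move that increases an $O$-heap is reversible at no net cost to the $O$-part, and both games really are \textsc{poker nim} on $O$ with common value $\bigoplus_{x\in O}g_x$; your pairing argument closes without difficulty in that case. So the fix is not a cleverer invariant but an added hypothesis (no parity conflicts), or a modified $D(T)$ in which $\sigma$ feeds only those vertices of $O$ having a non-$O$ in-neighbour in $T$. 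For comparison, the paper's own proof simply asserts that both games are \textsc{poker nim} on $O$ and does not address this case at all, so you have in fact located a genuine problem with the proposition as stated rather than merely an unfinished step in your own argument.
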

\begin{proof}
The key observation is that the pebbling games on $T$ and $D(T)$ are both equivalent to \textsc{poker nim} on the set $O$. Since the two games have the same set of \textsc{nim} moves, their game values are equivalent.
\end{proof}

Applying Const. \ref{const:reduction} to an in-star results in a single arc, and when $T$ is a directed path as in Thm. \ref{thm:greenpath}, $D(T)$ is simply an out-star. Thus, many tree positions can be reduced to positions on fewer vertices.

It is worth noting, however, that many trees will not reduce to simple positions.

In particular, the transitive triple graph has proven very difficult to analyze. However, we present here the set of $\mathcal{P}$-positions.

\begin{Theorem}\label{thm:green:tt}
A position in \textsc{blocking pebbles} on a transitive triple with $g_1$ green pebbles on the source vertex, $g_3$ on the sink, and $g_2$ on the remaining vertex, is a $\mathcal{P}$-position if and only if $g_2=g_3$.
\end{Theorem}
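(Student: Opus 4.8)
The plan is to verify directly that the set $\mathcal{P} = \{(g_1,g_2,g_3) : g_2 = g_3\}$ satisfies the two conditions that uniquely characterize the $\mathcal{P}$-positions of a finite impartial game under normal play: (1) every move from a position in $\mathcal{P}$ leads to a position outside $\mathcal{P}$, and (2) from every position outside $\mathcal{P}$ there is a move into $\mathcal{P}$. Since the transitive triple with a fixed pebble count has a finite, acyclic game tree, these two conditions suffice; in particular condition (2) automatically forces the two terminal positions $(0,0,0)$ and $(1,0,0)$ — both of which lie in $\mathcal{P}$ — to be $\mathcal{P}$-positions, so no separate base case is needed.

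First I would enumerate the legal moves. Label the vertices $1$ (source), $2$ (middle), $3$ (sink), with arcs $1\to 2$, $2\to 3$, $1\to 3$. From the source only the two-for-one pebbling move is available, sending $(g_1,g_2,g_3)$ to $(g_1-2,\,g_2+1,\,g_3)$ or $(g_1-2,\,g_2,\,g_3+1)$; from the middle vertex one may either retreat $k\ge 1$ pebbles to the source, giving $(g_1+k,\,g_2-k,\,g_3)$, or pebble one to the sink, giving $(g_1,\,g_2-2,\,g_3+1)$; from the sink one may retreat $k\ge 1$ pebbles to the source, giving $(g_1+k,\,g_2,\,g_3-k)$, or retreat $k\ge 1$ pebbles to the middle, giving $(g_1,\,g_2+k,\,g_3-k)$. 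The structural point to emphasize is the asymmetry built into the ruleset here: pebbles on the middle vertex and on the sink can always be retreated one-for-one toward the source, whereas pebbles on the source can only be shed two-for-one.

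For condition (1) I would run through these six move families starting from a position with $g_2 = g_3$ and observe that each one destroys the equality. The two pebbling moves and the middle-to-source retreat each change exactly one of $g_2, g_3$ (by $\pm 1$ or by $-k$), and leave the other fixed; the middle-to-sink pebbling move changes them to $g_2-2$ and $g_3+1$; and the sink-to-middle retreat changes them by $+k$ and $-k$ with $k\ge 1$. In every case the middle and sink coordinates differ afterward, so no move keeps us in $\mathcal{P}$. This direction is entirely mechanical.

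For condition (2), suppose $g_2 \neq g_3$. If $g_2 > g_3$, retreat $k = g_2 - g_3$ pebbles from the middle vertex to the source — legal since $1 \le k \le g_2$ — reaching $(g_1 + g_2 - g_3,\, g_3,\, g_3) \in \mathcal{P}$. If $g_2 < g_3$, retreat $k = g_3 - g_2$ pebbles from the sink to the source — legal since $1 \le k \le g_3$ — reaching $(g_1 + g_3 - g_2,\, g_2,\, g_2) \in \mathcal{P}$. The only place this argument can go wrong is the bookkeeping: one must confirm the rebalancing move is genuinely legal (a positive number of pebbles, not exceeding the supply at that vertex, and using a move type that exists at that vertex), and one must be sure no move family on the triple has been overlooked. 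Once those points are nailed down, both conditions hold and the characterization follows. It is worth noting why this short argument does not also pin down the full Grundy values: conditions (1)–(2) only require one good move from each $\mathcal{N}$-position, while the Grundy value depends on the whole spectrum of options, and the two-for-one moves out of the source make that spectrum genuinely intricate — which is presumably why the transitive triple resists a complete analysis even though its $\mathcal{P}$-positions are this clean.
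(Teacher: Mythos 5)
Your proof is correct, but it takes a different route from the paper's. The paper argues by analogy with \textsc{poker nim}: it declares the source pebbles superfluous, treats $g_2$ and $g_3$ as two \textsc{nim} heaps, and claims that any move increasing $g_2+g_3$ can be reversed, so the $\mathcal{P}$-positions are those of two-heap \textsc{nim}, i.e.\ $g_2=g_3$. That argument is quick but glosses over the non-\textsc{nim} moves that do \emph{not} increase $g_2+g_3$ (the sink-to-middle retreat $(g_2,g_3)\mapsto(g_2+k,g_3-k)$ and the middle-to-sink pebbling move $(g_2,g_3)\mapsto(g_2-2,g_3+1)$), which must also be handled. You instead verify directly the two closure conditions characterizing $\mathcal{P}$-positions, with a complete enumeration of all six move families; this is more mechanical but also more airtight, and it explicitly covers exactly the moves the paper's reversal argument skips. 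The one background fact you assert without proof is that the game tree is finite and acyclic; since retreats conserve pebbles, ruling out cycles needs a small potential argument (e.g.\ order positions by total pebble count and then by the weighted sum of pebble positions, both of which decrease lexicographically under every move), though the paper itself also takes non-loopiness for granted. Your closing remark correctly explains why this method yields the $\mathcal{P}$-positions without yielding Grundy values, which is consistent with the paper's observation that the transitive triple resists a full analysis.
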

\begin{proof}
Note that, as in all other green-only positions, pebbles on the source vertex are superfluous. Since any move that increases the total $g_2+g_3$ can be undone, we can consider these heaps as \textsc{nim} heaps and play accordingly.
\end{proof}

We close this section with a very simple result, but one that may prove useful in future investigations into the game.

\begin{Theorem}\label{thm:transtourn}
A single green pebble on the sink node of a transitive tournament on $n$ vertices is equivalent to a \textsc{nim} heap of size $n$.
\end{Theorem}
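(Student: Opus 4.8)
The plan is to notice that a position carrying only one pebble can never permit a ``remove two, place one'' pebbling move, so that the whole game collapses to a token-sliding game on the tournament, and then to recognize that token game as a single \textsc{nim} heap.

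First I would record the governing invariant. In Ruleset~\ref{rules} a move of the first type (pushing pebbles to an in-neighbor) preserves the total pebble count, while a move of the second type requires two pebbles on a single vertex and so is illegal in any position having just one pebble present; since there are no blue or red pebbles, the blocking clause is vacuous. As the starting position is a lone green pebble, every reachable position is again a lone green pebble, so no vertex ever accumulates two pebbles and pebbling moves are never unlocked. Hence a position is completely encoded by the single vertex $v$ holding the pebble, and its options are exactly the positions ``pebble on $u$'' as $u$ runs over the in-neighbors of $v$. Because the graph is acyclic each such move carries the pebble strictly toward the source, so --- unlike the \textsc{poker nim} picture underlying the surrounding theorems --- the game is non-loopy, with nothing to reverse. (The tree reduction of Construction~\ref{const:reduction} does not apply here, the tournament not being a tree, but it is not needed.)

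Next I would specialize to the transitive tournament, ordering its vertices $v_1,\dots,v_n$ so that $v_i\to v_j$ precisely when $i<j$; then $v_n$ is the sink and the in-neighbors of $v_k$ are exactly $v_1,\dots,v_{k-1}$. Thus from ``pebble on $v_k$'' the options are exactly the positions ``pebble on $v_j$'' with $j<k$. This is precisely the move graph of a single \textsc{nim} heap: identifying ``pebble on $v_k$'' with the heap from which one may move to any strictly smaller heap, a one-line $\mathrm{mex}$ induction on $k$ assigns ``pebble on $v_k$'' the Grundy value of the corresponding \textsc{nim} heap, so that the sink $v_n$ is equivalent to a single \textsc{nim} heap, as claimed (a check of the base cases pins down the size).

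The one delicate point is the invariant in the first step: one must be certain that no legal sequence of green moves can ever deposit a second pebble onto a vertex, since a single such occurrence would re-enable a pebbling move and destroy the chain structure on which everything rests. Once that is pinned down the rest is the routine Sprague--Grundy computation for \textsc{nim}, consistent with the paper's billing of this as ``a very simple result.''
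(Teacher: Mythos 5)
Your proof takes essentially the same route as the paper's one-line argument: with only one green pebble in play the two-for-one pebbling move is never available, the blocking clause is vacuous, and the game is pure token-sliding to in-neighbors, so the sink of the transitive tournament has exactly the options of a single \textsc{nim} heap. Your version is more careful in making the single-pebble invariant explicit, which is the right thing to pin down.

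One caveat: the ``check of the base cases'' that you defer is exactly where the content lies, and carrying it out exposes a discrepancy with the stated theorem. Ordering the vertices so that $v_i\to v_j$ for $i<j$, the source $v_1$ has no options and hence Grundy value $0$; inductively $v_k$ has options $v_1,\dots,v_{k-1}$ with values $0,\dots,k-2$, so $v_k$ has value $k-1$. The sink of a transitive tournament on $n$ vertices is therefore $*(n-1)$, a \textsc{nim} heap of size $n-1$, not $n$. This off-by-one sits in the theorem statement itself --- the paper's own proof likewise never computes the size --- so your argument is sound, but completed honestly it proves equivalence to a heap of size $n-1$, and you should not leave that step implicit.
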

\begin{proof}
We simply consider all options of this position. Since the pebble can only move back, and can move to any previous node, this is equivalent to removing any number of stones from a \textsc{nim} heap.
\end{proof}
\section{Further directions}

There remain many open questions and avenues for further study of \textsc{blocking pebbles}. In particular, we would like to resolve the question of game values for all-green games. As we have mentioned, it has proven difficult to determine these values when the underlying graph contains cycles. 

Through the use of computational software, in particular CGSuite~\cite{siegel2004combinatorial}, we have been able to find positions with many dyadic game values. It remains an open question whether or not there is a dyadic rational $\frac{a}{2^b}$ that is not the value of any position in \textsc{blocking pebbles}. 

\bibliography{pebblingbib}{}
\bibliographystyle{plain}

\end{document}